\newtheorem{theorem}{Theorem}
\newtheorem{lemma}[theorem]{Lemma}
\newtheorem{proposition}[theorem]{Proposition}
\theoremstyle{definition}
\newtheorem{question}{Question}
\theoremstyle{remark}
\newtheorem*{remark}{Remark}
\newtheorem*{acknowledgement}{Acknowledgements}
\def\det{{\mathop{\rm det}}}
\def\BF1{{\mathbf 1}}
\def\NN{{\mathbb N}}
\def\RR{{\mathbb R}}
\theoremstyle{definition}
\theoremstyle{remark}
\def\NN{{\mathbb N}}
\newcommand{\thmref}[1]{Theorem~\ref{#1}}
\newcommand{\lemref}[1]{Lemma~\ref{#1}}
\newcommand{\eqnref}[1]{Equation~(\ref{#1})}
\newcommand{\propref}[1]{Proposition~\ref{#1}}
\newcommand{\figref}[1]{Figure~\ref{#1}}
\begin{document}
\title[volumes of $n$-simplices with vertices on a polynomial space curve]{volumes of $n$-simplices with vertices on a polynomial space curve}



\keywords{Complete Symmetric Polynomials, Disciminant, Alternant, Triangle Area, volumes of $n$-parallelpipeds, volumes of
$n$-simplices.}
\begin{abstract}
In this paper,
we give a formula for the area of the triangle formed by the
vertices that live on a given polynomial, and we generalize this formula to
the volumes of $n$-simplices with vertices on a polynomial space curve.
To prove these results, we use induction arguments and a well known identity
for complete symmetric polynomials.
\end{abstract}

\maketitle



\begin{floatingfigure}[r]{4 in}
\centering
\includegraphics[scale=0.65]{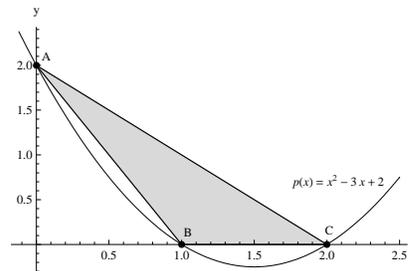} \caption{Triangle with vertices (0, 2), (1, 0) and (2, 0) living on the parabola $p(x)=x^2-3x+2$.} \label{fig curvewd}
\end{floatingfigure}
Given a number $x_1 \in \RR$ and a parabola $p(x)=x^2+bx+c$ defined over $\RR$,
triangles with vertices $(x_1,p(x_1))$, $(x_1+1,p(x_1+1))$ and $(x_1+2,p(x_1+2))$
have unit area whatever the values of $b$, $c$ and $x_1$. \figref{fig curvewd} shows an example.

Starting with this observation, we provide answers to the following questions:
\begin{question}\label{ques triangle}
Let $p(t)=\sum_{k=0}^{N} a_{k}t^k$ be polynomial of degree $N \geq 2$, i.e. $a_{N}\not=0$, with real coefficients.
For any given three real numbers $x_1$, $x_2$, $x_3$, we consider the triangle $ABC$ formed by the vertices
$\displaystyle A=(x_1 ,p(x_1)), \,  B=(x_2 ,p(x_2)), \,  C=(x_3 ,p(x_3)).$
If any two of $x_1, x_2, x_3$ are equal to each other then, clearly the area of $ABC$ is $0$.

Can we obtain a formula in factorized form for the area of $ABC$ so that we can see the effects of
the polynomial $p(t)$, and $x_1$, $x_2$, and $x_3$? If so, can we generalize the result to higher dimensions?

The answers we provide for this question are \thmref{Main Theorem} and \thmref{genmainthhm}.
\end{question}

\begin{question}\label{ques factor}
The following identity is well known.
\begin{equation}\label{eqn2}
x^m-y^m=(x-y)(x^{m-1}+x^{m-2}y+ \dots +xy^{m-2}+y^{m-1}), \quad \text{for any 2 $\leq m \in \NN$}.
\end{equation}
What further generalizations of this identity are there? For example, what would be the analogues identity in
$n \geq 3$ variables?

\propref{prop3var} and \thmref{genprop3var} answer question \ref{ques factor}.
\end{question}

\begin{question}\label{ques poly}
For any given $d \geq 0$ and $n \geq 2$, are there any ``short" algebraic expressions for the complete symmetric polynomial of
degree $d$ in $n$ variables?

Question \ref{ques poly} is closely related to question \ref{ques factor}. \thmref{thm symmetric poly} provides an answer for question \ref{ques poly}.
\end{question}

We start by considering question \ref{ques factor}. First,
we notice the appearance of all monomials of degree $m$, in $x$ and $y$, in \eqnref{eqn2}. This suggests that any
generalization of (\ref{eqn2}) to more variables should involve the complete symmetric polynomials.
Indeed this is the case, as we shall see. Let's recall
their definition and establish some notation.

For any $0\leq d \in \NN$ and $1\leq n \in \NN$, the complete symmetric polynomial of degree $d$ in $n$ variables is defined
to be the sum of all monomials of degree $d$ in $n$ variables. For variables $x_{1}, x_{2}, \ldots, x_{n}$, we set
$\displaystyle h_{d}(x_1, x_2, \ldots, x_n)= \sum_{d_1+d_2+\cdots+d_n=d}x_{1}^{d_1}x_{2}^{d_2} \ldots x_{n}^{d_n},$
where $d_i$ is a nonnegative integer for any $i=1, 2, \ldots, n$. By convention, we set $h_{0}(x_1, x_2, \dots, x_n)=1$ for any $n \geq 1$.
See \cite[Chapter 1]{MD} for a discussion of basic properties of complete symmetric polynomials.

We can rewrite \eqnref{eqn2} as $x_{1}^{m}-x_{2}^m=(x_{1}-x_{2})h_{m-1}(x_{1},x_{2})$ for $m \geq 1$.
\begin{lemma}\label{lemdech}
For $1 \leq s \in \NN$ and $2 \leq n \in \NN$,
$\displaystyle h_{s}(x_1,x_2, \dots, x_n)=\sum_{j=1}^{n}x_{j}h_{s-1}(x_j,x_{j+1}, \dots, x_n).$
\end{lemma}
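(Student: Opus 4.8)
The plan is to prove the identity by induction on the number of variables $n$, using the elementary one-step recurrence that peels off the first variable. Before that, it helps to record the conceptual picture, since it makes clear why the identity is true and suggests the bookkeeping. The right-hand side simply sorts the monomials appearing in $h_s(x_1,\dots,x_n)$ according to the smallest index $j$ for which the corresponding exponent $d_j$ is positive. A monomial with this ``leading index'' $j$ has $x_1,\dots,x_{j-1}$ absent and $x_j$ present, so it factors uniquely as $x_j$ times a monomial of degree $s-1$ in the variables $x_j,x_{j+1},\dots,x_n$; these latter monomials are exactly the ones summed in $h_{s-1}(x_j,\dots,x_n)$. This already gives a bijective proof, but I will instead carry out the cleaner inductive argument advertised in the abstract, which avoids listing monomials explicitly.

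The single ingredient I would establish first is the standard recurrence
\begin{equation}
h_s(x_1,x_2,\dots,x_n) = x_1\, h_{s-1}(x_1,x_2,\dots,x_n) + h_s(x_2,x_3,\dots,x_n),
\end{equation}
valid for $s\ge 1$ and $n\ge 2$. This follows by splitting the monomials of degree $s$ into those in which $x_1$ does not occur, which contribute $h_s(x_2,\dots,x_n)$, and those in which $x_1$ occurs with positive exponent, from which factoring out one $x_1$ leaves a degree $s-1$ monomial in all of $x_1,\dots,x_n$ and hence yields $x_1\,h_{s-1}(x_1,\dots,x_n)$.

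For the induction, the base case $n=1$ is immediate: $h_s(x_1)=x_1^s = x_1\,x_1^{s-1}=x_1\,h_{s-1}(x_1)$, which is the claimed formula with a single term, using $h_0(x_1)=1$ when $s=1$. For the inductive step, I assume the lemma holds for the $n-1$ variables $x_2,\dots,x_n$, namely $h_s(x_2,\dots,x_n)=\sum_{j=2}^{n} x_j\,h_{s-1}(x_j,\dots,x_n)$, and substitute this into the recurrence above to obtain
\begin{equation}
h_s(x_1,\dots,x_n) = x_1\,h_{s-1}(x_1,\dots,x_n) + \sum_{j=2}^{n} x_j\,h_{s-1}(x_j,\dots,x_n) = \sum_{j=1}^{n} x_j\,h_{s-1}(x_j,\dots,x_n),
\end{equation}
which is exactly the asserted identity for $n$ variables.

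There is no serious obstacle here; the statement is essentially a reindexing of the definition, and the only genuine content is the one-variable-stripping recurrence. The points that require care are fixing the boundary conventions (the value of $h_0$, and the single-variable identity $h_s(x_j)=x_j^s$ that anchors the induction) and keeping the index ranges of the truncated lists $x_j,\dots,x_n$ consistent as the induction passes from $n-1$ to $n$. If one prefers to begin the induction at the stated range $n=2$, the base case is the two-variable check $h_s(x_1,x_2)=x_1 h_{s-1}(x_1,x_2)+x_2^{\,s}$, which again follows directly from the recurrence together with $h_{s-1}(x_2)=x_2^{\,s-1}$.
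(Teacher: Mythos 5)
Your proof is correct, and it is essentially the paper's argument: the paper proves the lemma precisely by the grouping you sketch in your opening paragraph (sorting the monomials by the smallest index $j$ with positive exponent, i.e.\ divisible by $x_j$ but by none of $x_1,\dots,x_{j-1}$), and your formal route --- the one-variable-stripping recurrence $h_s(x_1,\dots,x_n)=x_1h_{s-1}(x_1,\dots,x_n)+h_s(x_2,\dots,x_n)$ combined with induction on $n$ --- simply unrolls that same partition one variable at a time. Both arguments are sound; yours just adds inductive bookkeeping to the decomposition the paper states directly.
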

\begin{proof}
The proof follows by grouping the monomials  as the ones that are divisible by $x_1$, the ones that are not divisible
by $x_1$ but divisible by $x_2$, the ones that are not divisible
by $x_1$ and $x_2$ but divisible by $x_3$, and so on.
\end{proof}
\begin{lemma}\label{lemh}
For $2 \leq m \in \NN$,
\begin{equation*}
h_{m}(x_1,x_2,x_3)=(x_1+x_2)h_{m-1}(x_1,x_2,x_3)-x_1x_2h_{m-2}(x_1,x_2,x_3)+x_{3}^m .
\end{equation*}
\end{lemma}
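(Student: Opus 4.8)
The plan is to exploit the multiplicative generating function of the complete symmetric polynomials, namely the well-known identity $\sum_{m \geq 0} h_m(x_1,x_2,x_3)\,t^m = \prod_{i=1}^{3}(1-x_i t)^{-1}$, which encodes the whole family at once. Writing $H(t)$ for this power series, I would multiply both sides by the quadratic factor $(1-x_1 t)(1-x_2 t)=1-(x_1+x_2)t+x_1 x_2 t^2$; since this cancels two of the three factors in the product, the product collapses to the single geometric series $(1-x_3 t)^{-1}=\sum_{m\geq 0}x_3^{m}t^{m}$. Comparing the coefficient of $t^m$ on the two sides then yields $h_m-(x_1+x_2)h_{m-1}+x_1 x_2 h_{m-2}=x_3^{m}$ for every $m\geq 2$, which is the claimed recurrence after rearrangement.

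Because the surrounding material is written in an elementary style and has \lemref{lemdech} available, I would prefer to give a second, generating-function-free argument as the main proof. Applying \lemref{lemdech} in three variables gives $h_m(x_1,x_2,x_3)=x_1 h_{m-1}(x_1,x_2,x_3)+x_2 h_{m-1}(x_2,x_3)+x_3 h_{m-1}(x_3)$, and since $h_{m-1}(x_3)=x_3^{m-1}$ the last summand is exactly $x_3^{m}$, already producing the inhomogeneous term. It then remains to rewrite $x_2 h_{m-1}(x_2,x_3)$. For this I would use the standard grouping of monomials by divisibility by $x_1$, namely $h_{m-1}(x_1,x_2,x_3)=h_{m-1}(x_2,x_3)+x_1 h_{m-2}(x_1,x_2,x_3)$, and substitute $h_{m-1}(x_2,x_3)=h_{m-1}(x_1,x_2,x_3)-x_1 h_{m-2}(x_1,x_2,x_3)$. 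Collecting terms then turns $x_1 h_{m-1}+x_2 h_{m-1}(x_2,x_3)$ into $(x_1+x_2)h_{m-1}(x_1,x_2,x_3)-x_1 x_2 h_{m-2}(x_1,x_2,x_3)$, which gives the identity directly, with no induction required.

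The one point that needs care, and where I expect the only real friction, is the asymmetric role of $x_3$: the recurrence is symmetric in $x_1,x_2$ but singles out $x_3$ through the term $x_3^{m}$. This asymmetry is precisely what \lemref{lemdech} builds in, since its summation runs over the smallest-index variable dividing each monomial and its last summand involves $x_3$ alone. I would therefore sanity-check the edge case $m=2$ by a direct expansion of $h_2(x_1,x_2,x_3)$ to confirm the bookkeeping, and verify the divisibility grouping so that the cross term $x_1 x_2 h_{m-2}$ appears with the correct sign. No deeper obstacle is anticipated: the entire content lies in matching the two elementary recurrences correctly.
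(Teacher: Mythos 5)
Both of your arguments are correct, and your preferred second argument is essentially the paper's own proof. The paper likewise begins with \lemref{lemdech} ($s=m$, $n=3$) to get $h_m(x_1,x_2,x_3)=x_1h_{m-1}(x_1,x_2,x_3)+x_2h_{m-1}(x_2,x_3)+x_3^m$ and then eliminates the two-variable term $h_{m-1}(x_2,x_3)$; the only difference is bookkeeping. You do the elimination in one step via the grouping identity $h_{m-1}(x_1,x_2,x_3)=h_{m-1}(x_2,x_3)+x_1h_{m-2}(x_1,x_2,x_3)$, whereas the paper reaches the same substitution by two further applications of \lemref{lemdech} (first with $s=m-1$, $n=2$, then with $s=m-1$, $n=3$, adding and subtracting $x_3^{m-1}$). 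Your route is slightly cleaner, and the grouping identity you invoke is itself an immediate consequence of \lemref{lemdech} (subtract its $n=2$ case from its $n=3$ case); indeed the paper uses exactly this kind of grouping later, in the proof of \thmref{thm symmetric poly}. Your generating-function argument is a genuinely different route not used anywhere in the paper: multiplying $\sum_{m\geq 0}h_m(x_1,x_2,x_3)t^m=\prod_{i=1}^{3}(1-x_it)^{-1}$ by $(1-x_1t)(1-x_2t)$ and comparing coefficients of $t^m$ is a correct formal power series computation, and it buys generality -- multiplying instead by $\prod_{i=1}^{n-1}(1-x_it)$ gives the $n$-variable analogue expressing $h_m(x_1,\dots,x_n)$ through elementary symmetric polynomials of $x_1,\dots,x_{n-1}$ plus $x_n^m$ -- at the cost of importing the generating-function identity rather than staying within the paper's elementary monomial-grouping toolkit. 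Your observation that no induction is needed is accurate: the paper's proof is likewise induction-free, and your $m=2$ check is consistent with the convention $h_0=1$.
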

\begin{proof} We use \lemref{lemdech} to derive the result.
\begin{equation*}
\begin{split}
&h_{m}(x_1,x_2,x_3)
\\ &=x_{1}h_{m-1}(x_1,x_2,x_3)+x_2h_{m-1}(x_2,x_3)+x_{3}^m, \quad \text{by \lemref{lemdech} with $s=m$ and $n=3$}
\\ &=x_{1}h_{m-1}(x_1,x_2,x_3)+x_2\big(x_2h_{m-2}(x_2,x_3)+x_{3}^{m-1}\big)+x_{3}^m,  \quad \text{by \lemref{lemdech}
with $s=m-1$}
\\ &\text{and $n=2$. Then, \lemref{lemdech} with  $s=m-1$ and $n=3$ gives}
\\ &=x_{1}h_{m-1}(x_1,x_2,x_3)+x_2\big(h_{m-1}(x_1,x_2,x_3)-x_1h_{m-2}(x_1,x_2,x_3)
-x_{3}^{m-1}+x_{3}^{m-1} \big)+x_{3}^m
\\ &=(x_1+x_2)h_{m-1}(x_1,x_2,x_3)-x_1x_2h_{m-2}(x_1,x_2,x_3)+x_{3}^m .
\end{split}
\end{equation*}
\end{proof}
The following proposition gives a partial answer to question \ref{ques factor}.
It is the generalization of \eqnref{eqn2} to three variables.
\begin{proposition}\label{prop3var}
For $1 \leq m \in \NN$ and variables $x_1, x_2, x_3$,
\begin{equation*}
(x_{3}-x_{2})x_{1}^{m+1}+(x_{1}-x_{3})x_{2}^{m+1}+(x_{2}-x_{1})x_{3}^{m+1}
=(x_3-x_2)(x_3-x_1)(x_2-x_1)h_{m-1}(x_1,x_2,x_3) .
\end{equation*}
\end{proposition}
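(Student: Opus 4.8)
The plan is to argue by induction on $m$, using \lemref{lemh} to drive the inductive step, exactly in the spirit announced in the abstract. Throughout I would write $L_m$ for the left-hand side $(x_3-x_2)x_1^{m+1}+(x_1-x_3)x_2^{m+1}+(x_2-x_1)x_3^{m+1}$ and $D=(x_3-x_2)(x_3-x_1)(x_2-x_1)$ for the Vandermonde-type factor, so that the claim becomes the compact statement $L_m=D\,h_{m-1}(x_1,x_2,x_3)$.

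First I would dispose of the two base cases $m=1$ and $m=2$ by direct expansion. For $m=1$ the assertion reads $L_1=D$ (since $h_0=1$), which is the classical three-point factorization and follows by multiplying out both sides. For $m=2$ it reads $L_2=D\,h_1=D(x_1+x_2+x_3)$, again a finite check. Two base cases are needed here because \lemref{lemh} is a second-order recurrence.

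The heart of the argument is the inductive step for $m\geq 3$. I would start from the right-hand side and apply \lemref{lemh} with $m$ replaced by $m-1$ (legitimate since $m-1\geq 2$) to obtain $D\,h_{m-1}=(x_1+x_2)\,D\,h_{m-2}-x_1x_2\,D\,h_{m-3}+D\,x_3^{m-1}$. The induction hypothesis identifies $D\,h_{m-2}=L_{m-1}$ and $D\,h_{m-3}=L_{m-2}$, so it remains to establish the purely algebraic recurrence $L_m=(x_1+x_2)L_{m-1}-x_1x_2L_{m-2}+D\,x_3^{m-1}$. I expect this to be the main obstacle, although it is really just bookkeeping: expanding $(x_1+x_2)L_{m-1}-x_1x_2L_{m-2}$ and collecting the six resulting monomials, the terms involving $x_1^{m+1}$ and $x_2^{m+1}$ assemble exactly into $(x_3-x_2)x_1^{m+1}$ and $(x_1-x_3)x_2^{m+1}$, while the leftover terms collapse to $(x_2-x_1)x_3^{m+1}$ precisely because of the elementary identity $(x_1+x_2)x_3-x_1x_2+(x_3-x_2)(x_3-x_1)=x_3^2$, the summand $D\,x_3^{m-1}$ supplying the $(x_3-x_2)(x_3-x_1)$ piece. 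Once this identity is verified the inductive step closes and the proposition follows.

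Alternatively, one can give a non-inductive derivation by regrouping $L_m$ as $x_3(x_1^{m+1}-x_2^{m+1})-x_1x_2(x_1^{m}-x_2^{m})-(x_1-x_2)x_3^{m+1}$ and invoking \eqnref{eqn2} to factor out the differences; but the inductive route built on \lemref{lemh} is the most economical given the lemmas already in hand.
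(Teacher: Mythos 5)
Your proposal is correct and follows essentially the same route as the paper: induction on $m$ with the two base cases $m=1,2$, using \lemref{lemh} to expand $D\,h_{m-1}$ and the two preceding instances of the claim to close the recurrence. The only difference is cosmetic --- you make explicit the collapsing identity $(x_1+x_2)x_3-x_1x_2+(x_3-x_2)(x_3-x_1)=x_3^2$ that the paper leaves implicit in the phrase ``by \eqnref{eqn3} and \eqnref{eqn4}'' --- which, if anything, is a useful elaboration.
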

\begin{proof}
The proof is by induction on $m$.
Since $h_{0}(x_1,x_2,x_3)=1$ and $h_{1}(x_1,x_2,x_3)=x_1+x_2+x_3$,
the equalities for $m=1$ and $m=2$ can be verified by a simple algebra.

Suppose that the equality is true for all $i$ such that $2 \leq i \leq m$. Then, in particular,
for the cases $i=m$ and $i=m-1$ we have
\begin{equation}\label{eqn3}
(x_{3}-x_{2})x_{1}^{m+1}+(x_{1}-x_{3})x_{2}^{m+1}+(x_{2}-x_{1})x_{3}^{m+1}
=(x_3-x_2)(x_3-x_1)(x_2-x_1)h_{m-1}(x_1,x_2,x_3),
\end{equation}
\begin{equation}\label{eqn4}
(x_{3}-x_{2})x_{1}^{m}+(x_{1}-x_{3})x_{2}^{m}+(x_{2}-x_{1})x_{3}^{m}
=(x_3-x_2)(x_3-x_1)(x_2-x_1)h_{m-2}(x_1,x_2,x_3) .
\end{equation}
For $i=m+1$, by \lemref{lemh},
\begin{equation*}
\begin{split}
&(x_3-x_2)(x_3-x_1)(x_2-x_1)h_{m}(x_1,x_2,x_3)
\\ &=(x_3-x_2)(x_3-x_1)(x_2-x_1)\big((x_1+x_2)h_{m-1}(x_1,x_2,x_3)-x_1x_2h_{m-2}(x_1,x_2,x_3)+x_{3}^m \big)
\\ &=(x_{3}-x_{2})x_{1}^{m+2}+(x_{1}-x_{3})x_{2}^{m+2}+(x_{2}-x_{1})x_{3}^{m+2} .
\end{split}
\end{equation*}
The last equality is by \eqnref{eqn3} and \eqnref{eqn4}.
\end{proof}
Before answering question \ref{ques factor} for $n \geq 4$ variables, we answer question
\ref{ques triangle} as an application of \propref{prop3var}.

Let $p(t)$, $x_1$, $x_2$, $x_3$, and the triangle $ABC$ be as given in question \ref{ques triangle}.
Then the signed area $A(x_1 ,x_2,x_3)$ of the triangle $ABC$ is given by $\frac{1}{2} \det(\mathbf{M}) $,
where
$ \displaystyle \mathbf{M}=\begin{pmatrix} 1&x_1&p(x_1)\\
               1&x_2&p(x_2)\\
               1&x_3&p(x_3)
\end{pmatrix}.$
Thus the area of $ABC$ is $\left| A(x_1 ,x_2,x_3) \right|$ and we have
\begin{equation}\label{eqn5}
A(x_1,x_2,x_3)=\frac{1}{2}\big((x_3-x_2)p(x_1)-(x_3-x_1)p(x_2)+(x_2-x_1)p(x_3)\big)
\end{equation}
(See \cite[Page 1]{MPF}, for the relation between this area and the convexity of $p(t)$.)
We note that  $\det(\mathbf{M})$ is a particular example of an alternant, a definition first used
by Sylvester \cite[Page 322]{SM} for determinants of matrices such that $i-$th row of $\mathbf{M}$ are
functions of variable $x_{i}$ and same functions are used for each row.
As mentioned in the article \cite{SM}, differences of the variables divide $A(x_1,x_2,x_3)$. By the following
theorem, which uses \propref{prop3var}, we express $A(x_1,x_2,x_3)$ in a factorized form.
\begin{theorem} \label{Main Theorem}
Let the polynomial $p(t)=a_0+a_1 t+ \dots + a_N t^N$ and the triangle $ABC$ be as above, then
\begin{equation*}
 \left| A(x_1 ,x_2,x_3) \right| =\frac{1}{2} \left| x_1-x_2 \right| \left| x_1-x_3 \right| \left| x_2-x_3 \right|
\left| \sum_{k=2}^{N} a_{k}h_{k-2}(x_1,x_2,x_3) \right| .
\end{equation*}
\end{theorem}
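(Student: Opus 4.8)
The plan is to start from the explicit area formula \eqnref{eqn5} and to reduce the claim directly to \propref{prop3var}; no fresh induction is needed here, since the inductive content has already been packaged into that proposition. First I would substitute $p(t)=\sum_{k=0}^{N} a_k t^k$ into the identity $2A(x_1,x_2,x_3)=(x_3-x_2)p(x_1)-(x_3-x_1)p(x_2)+(x_2-x_1)p(x_3)$ and interchange the two summations, so that the expression becomes $\sum_{k=0}^{N} a_k\big((x_3-x_2)x_1^{k}+(x_1-x_3)x_2^{k}+(x_2-x_1)x_3^{k}\big)$, where I have rewritten $-(x_3-x_1)$ as $(x_1-x_3)$ so that each bracketed quantity is in exactly the form of the left-hand side of \propref{prop3var}.

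The second step is to evaluate each bracket. For $k\geq 2$ the bracket is precisely the left-hand side of \propref{prop3var} under the index shift $m+1=k$, so it equals $(x_3-x_2)(x_3-x_1)(x_2-x_1)\,h_{k-2}(x_1,x_2,x_3)$. The two lowest-degree terms must be handled by hand: for $k=0$ the bracket collapses to $(x_3-x_2)+(x_1-x_3)+(x_2-x_1)=0$, and for $k=1$ a one-line expansion gives $(x_3-x_2)x_1+(x_1-x_3)x_2+(x_2-x_1)x_3=0$ as well. Consequently the $k=0$ and $k=1$ contributions drop out, and only the terms $k=2,\dots,N$ survive.

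Finally I would factor the common product $(x_3-x_2)(x_3-x_1)(x_2-x_1)$ out of the surviving sum to obtain $2A(x_1,x_2,x_3)=(x_3-x_2)(x_3-x_1)(x_2-x_1)\sum_{k=2}^{N} a_k h_{k-2}(x_1,x_2,x_3)$, then divide by $2$ and pass to absolute values, using $|x_3-x_2|=|x_2-x_3|$ and the analogous identities to match the ordering of the factors in the stated formula. I expect the only real bookkeeping to be the index shift between the exponent $k$ and the degree $k-2$ of the complete symmetric polynomial, together with the separate verification that the $k=0$ and $k=1$ terms vanish; there is no substantive obstacle, since \propref{prop3var} already carries the algebraic heart of the argument.
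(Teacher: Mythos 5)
Your proof is correct, but it takes a different route from the paper's. The paper proves \thmref{Main Theorem} by induction on the degree $N$: the base case $N=2$ is checked by direct algebra, and the inductive step writes a degree-$(N+1)$ polynomial as $a_{N+1}t^{N+1}+p(t)$ with $\deg p\leq N$, applies the induction hypothesis to $p$, and invokes \propref{prop3var} once to absorb the top term $a_{N+1}\bigl((x_3-x_2)x_1^{N+1}+(x_1-x_3)x_2^{N+1}+(x_2-x_1)x_3^{N+1}\bigr)$ into the factored sum. You instead substitute the whole polynomial into \eqnref{eqn5} at once, interchange the sums, and apply \propref{prop3var} termwise to each bracket with $k\geq 2$ (via the shift $m=k-1\geq 1$, which correctly respects the proposition's hypothesis $m\geq 1$), checking by hand that the $k=0$ and $k=1$ brackets vanish. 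Your direct argument is essentially the paper's induction unrolled, and it buys two things: it dispenses with a second induction (the inductive content really does live entirely in \propref{prop3var}, as you say), and it makes explicit the cancellation of the constant and linear terms, which in the paper is hidden inside the ``simple algebra'' of the base case $N=2$. The paper's inductive formulation, for its part, is the template the author reuses verbatim for the $n$-variable generalization in \thmref{genmainthhm}, where the same peel-off-the-top-term step runs through \eqnref{eqn determinantal}; your termwise version would adapt there just as well. One small point of rigor in your favor: your final passage to absolute values via $|x_3-x_2|=|x_2-x_3|$ avoids the paper's (harmless but unnecessary) normalization ``without loss of generality $x_1\leq x_2\leq x_3$.''
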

\begin{proof}
The proof is given by induction on $N$, the degree of the polynomial $p(t)$. Without loss of generality, suppose that
$x_1 \leq x_2 \leq x_3$. It is enough to prove that
\begin{equation}\label{eqn6}
A(x_1,x_2,x_3) =\frac{1}{2} (x_2-x_1)(x_3-x_1)(x_3-x_2)\sum_{k=2}^{N} a_{k}h_{k-2}(x_1,x_2,x_3).
\end{equation}
If $N=2$, $A(x_1,x_2,x_3)$ simplifies to $\frac{1}{2}(x_2-x_1)(x_3-x_1)(x_3-x_2)a_{2}$, which is the desired form
since $h_{0}(x_1,x_2,x_3)=1$.

Assume that for some $N \geq 2$, \eqnref{eqn6} holds for all polynomials of degree at most $N$.
Then, given a polynomial $P(t)$ of degree $N+1$, we can write $P(t)=a_{N+1}t^{N+1}+p(t)$ where $a_{N+1}\not=0$ and
$p(t)$ has degree at most $N$. We then have
\begin{equation*}
\begin{split}
A(x_1,x_2,x_3)&=\frac{1}{2}\big((x_3-x_2)(a_{N+1}x_{1}^{N+1}+p(x_1))-(x_3-x_1)(a_{N+1}x_{2}^{N+1}+p(x_2))
\\ & \qquad +(x_2-x_1)(a_{N+1}x_{3}^{N+1}+p(x_3))\big), \qquad \text{by \eqnref{eqn5}}.
\\ &=\frac{1}{2}\big((x_3-x_2)p(x_1)-(x_3-x_1)p(x_2)+(x_2-x_1)p(x_3)\big)
\\ & \qquad +\frac{1}{2}a_{N+1}\big((x_3-x_2)x_{1}^{N+1}-(x_3-x_1)x_{2}^{N+1}
+(x_2-x_1)x_{3}^{N+1}\big)\\
&=\frac{1}{2} (x_2-x_1)(x_3-x_1)(x_3-x_2) \sum_{k=2}^{N} a_{k}h_{k-2}(x_1,x_2,x_3)+\frac{1}{2}a_{N+1}\big((x_3-x_2)x_{1}^{N+1}
\\ & \qquad -(x_3-x_1)x_{2}^{N+1}+(x_2-x_1)x_{3}^{N+1}\big), \qquad \text{by induction}.
\end{split}
\end{equation*}
\begin{equation*}
\begin{split}
\\ &=\frac{1}{2} (x_2-x_1)(x_3-x_1)(x_3-x_2) \sum_{k=2}^{N+1} a_{k}h_{k-2}(x_1,x_2,x_3),  \qquad \text{by \propref{prop3var}}.
\end{split}
\end{equation*}
\end{proof}
We answer question \ref{ques poly} by the following well-known identity for complete symmetric homogeneous polynomials (see \cite[Ex 7.4, p. 450 and p. 490]{RS}). We provide an elementary proof which relies on induction arguments.
\begin{theorem}\label{thm symmetric poly}
For $1 \leq m \in \NN$, $2 \leq n \in \NN$, and distinct variables $x_1, x_2, \dots x_n$,
\begin{equation*}
\sum_{i=1}^{n}\frac{x_{i}^{n+m-1}}
{\prod_{\substack{j=1 \\ j\not=i}}^{n}(x_i-x_j)}=h_{m}(x_1, x_2, \dots, x_n).
\end{equation*}
\end{theorem}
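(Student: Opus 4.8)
The plan is to prove the slightly stronger statement that, for every integer $k\ge 0$ and all distinct $x_1,\dots,x_n$,
\[
F_n(k):=\sum_{i=1}^{n}\frac{x_i^{k}}{\prod_{j\ne i}(x_i-x_j)}=h_{k-n+1}(x_1,\dots,x_n),
\]
with the convention $h_d=0$ for $d<0$; the theorem is then the special case $k=n+m-1$. I would prove this by induction on $n$, and, for each fixed $n$, by an inner induction on $k$, arranged so that the recursion governing $F_n(k)$ mirrors exactly the recursion satisfied by the complete symmetric polynomials coming from \lemref{lemdech}.

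The key algebraic step is to produce that matching recursion. For $k\ge1$ and $i\le n-1$ I split $x_i^{k}=x_i^{k-1}(x_i-x_n)+x_n x_i^{k-1}$; since $\prod_{j\ne i}(x_i-x_j)=(x_i-x_n)\prod_{j\le n-1,\,j\ne i}(x_i-x_j)$, the factor $(x_i-x_n)$ cancels in the first piece, while the $i=n$ term contributes only an $x_n$-multiple. Summing over $i$ collapses everything to
\[
F_n(k)=F_{n-1}(k-1)+x_n\,F_n(k-1)\qquad(k\ge1,\ n\ge2),
\]
where $F_{n-1}$ is formed from $x_1,\dots,x_{n-1}$. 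This is term-for-term parallel to $h_{s}(x_1,\dots,x_n)=h_{s}(x_1,\dots,x_{n-1})+x_n h_{s-1}(x_1,\dots,x_n)$, obtained by the same grouping of monomials used in \lemref{lemdech} (separating those that do and do not involve $x_n$). Taking $s=k-n+1$, the two recursions coincide, so the double induction closes once the base cases are secured; the outer base $n=2$ is immediate from \eqnref{eqn2}, since $F_2(k)=(x_1^{k}-x_2^{k})/(x_1-x_2)=h_{k-1}(x_1,x_2)$.

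The main obstacle is the inner base case $k=0$, namely the evaluation $F_n(0)=\sum_{i}1/\prod_{j\ne i}(x_i-x_j)$, which must vanish for $n\ge2$ (matching $h_{1-n}=0$); this single identity anchors the induction on $k$ and, through the recurrence, simultaneously forces the degenerate values $F_n(k)=0$ for $1\le k\le n-2$ and $F_n(n-1)=1=h_0$. I would establish it by an elementary interpolation argument: Lagrange interpolation at $x_1,\dots,x_n$ reproduces any polynomial of degree $\le n-1$ exactly, and since the coefficient of $x^{n-1}$ in the $i$-th Lagrange basis polynomial is $1/\prod_{j\ne i}(x_i-x_j)$, comparing the coefficient of $x^{n-1}$ on both sides for $f(x)=x^{k}$ yields $F_n(k)=0$ for $0\le k\le n-2$ and $F_n(n-1)=1$, delivering every low-power base value at once. (One could instead argue by residues, or by a short separate induction on $n$.) Keeping the bookkeeping among $k$, $m$, and the negative-degree convention straight is the only delicate point; once the recurrence and this base are in hand, specializing to $k=n+m-1$ gives the theorem.
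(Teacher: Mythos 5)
Your proposal is correct, and its engine is the same as the paper's: the recurrence $F_n(k)=F_{n-1}(k-1)+x_n F_n(k-1)$, obtained by writing $x_i^{k}=x_i^{k-1}(x_i-x_n)+x_n x_i^{k-1}$ and cancelling the factor $(x_i-x_n)$ against the denominator, is exactly the algebraic step in the paper's proof, matched there against the same grouping recursion $h_m(x_1,\dots,x_n)=h_m(x_1,\dots,x_{n-1})+x_n h_{m-1}(x_1,\dots,x_n)$; the paper runs a single induction on $K=n+m$ rather than your nested induction on $n$ and $k$, but these are equivalent arrangements of the same idea. Where you genuinely diverge is in the base cases. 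The paper anchors at $K=3,4$ via \eqnref{eqn2} and \propref{prop3var}; you instead prove the stronger statement $F_n(k)=h_{k-n+1}$ for all $k\ge 0$ (with the convention $h_d=0$ for $d<0$) and anchor it with the Lagrange-interpolation evaluation $F_n(k)=0$ for $0\le k\le n-2$ and $F_n(n-1)=1$. That evaluation is, in the paper, the separate ``closely related'' \thmref{thm symmetric} (also proved there by interpolation, though only for $k=n-2$, with a descent handling lower $k$), so your route unifies the two results into one induction, and your coefficient-of-$x^{n-1}$ argument delivers all the low-power values in one stroke. It also quietly repairs an edge the paper glosses over: when $m=1$, the paper's inductive step invokes the identity for $h_{m-1}=h_0$, i.e.\ $\sum_{i=1}^{n} x_i^{n-1}\big/\prod_{j\ne i}(x_i-x_j)=1$, which lies outside the stated range $m\ge 1$ and is never proved there; your extended statement covers precisely this value ($F_n(n-1)=1=h_0$) as part of the base. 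The cost is the mild bookkeeping of the negative-degree convention, which you handle correctly since the grouping recursion for $h_s$ remains valid for $s\le 0$; the benefit is a self-contained and strictly more complete argument than the paper's.
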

\begin{proof}
We obtain the result by induction on $K=n+m \geq 3$. The cases $K=3$ and $K=4$ follows from
\eqnref{eqn2} and \eqnref{prop3var}, respectively.
Suppose $K>4$.
\begin{equation*}
\begin{split}
h_{m}(x_1, x_2, \dots, x_n) &=h_{m}(x_1, x_2, \dots, x_{n-1})+x_n \cdot h_{m-1}(x_1, x_2, \dots, x_n), \quad \text{by grouping}\\
&=\sum_{i=1}^{n-1}\frac{x_{i}^{n+m-2}}
{\prod_{\substack{j=1 \\ j\not=i}}^{n-1}(x_i-x_j)}+x_n \sum_{i=1}^{n}\frac{x_{i}^{n+m-2}}
{\prod_{\substack{j=1 \\ j\not=i}}^{n}(x_i-x_j)}, \quad \text{by induction}\\
&=\sum_{i=1}^{n-1}\frac{x_{i}^{n+m-2}(x_i-x_n)}
{\prod_{\substack{j=1 \\ j\not=i}}^{n}(x_i-x_j)}+ \sum_{i=1}^{n-1} x_n \frac{x_{i}^{n+m-2}}
{\prod_{\substack{j=1 \\ j\not=i}}^{n}(x_i-x_j)} + \frac{x_n^{n+m-1}}{\prod_{\substack{j=1 \\ j\not=i}}^{n}(x_i-x_j)}.
\end{split}
\end{equation*}
This gives the result.
\end{proof}
If $n<m$, \thmref{thm symmetric poly} can be used to compute $h_{m}(x_1, x_2, \dots, x_n)$ effectively.
A result closely related to \thmref{thm symmetric poly} is the following theorem:
\begin{theorem}\label{thm symmetric}
Let $2 \leq n \in \NN$, and let $x_1, x_2, \ldots, x_n$ be distinct variables. Then for any $0 \leq k \leq n-2$, we have
$\displaystyle \sum_{i=1}^{n}\frac{x_{i}^{k}}
{\prod_{\substack{j=1 \\ j\not=i}}^{n}(x_i-x_j)}=0.$
\end{theorem}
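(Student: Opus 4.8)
The plan is to recognize the left-hand sum as the coefficient of $x^{n-1}$ in a Lagrange interpolation polynomial; this collapses the whole family of identities $0\le k\le n-2$ into a single degree count. First I would fix the distinct nodes $x_1,\dots,x_n$ and, for an arbitrary function $f$, recall the Lagrange interpolant
\[
L(x)=\sum_{i=1}^{n} f(x_i)\prod_{\substack{j=1\\ j\neq i}}^{n}\frac{x-x_j}{x_i-x_j}
=\sum_{i=1}^{n}\frac{f(x_i)}{\prod_{\substack{j=1\\ j\neq i}}^{n}(x_i-x_j)}\,\prod_{\substack{j=1\\ j\neq i}}^{n}(x-x_j),
\]
which is the unique polynomial of degree at most $n-1$ with $L(x_i)=f(x_i)$ for every $i$. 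The key observation is that each factor $\prod_{j\neq i}(x-x_j)$ is \emph{monic} of degree $n-1$, so the coefficient of $x^{n-1}$ in $L$ is exactly $\sum_{i=1}^{n} f(x_i)/\prod_{j\neq i}(x_i-x_j)$, which is precisely the sum in the statement once we specialize $f(x)=x^{k}$.

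I would then take $f(x)=x^{k}$ with $0\le k\le n-2$. Since $f$ is itself a polynomial of degree $k\le n-2\le n-1$, uniqueness of the interpolant forces $L(x)=x^{k}$. Because $n-1>k$, the coefficient of $x^{n-1}$ in $L$ is $0$, and the desired identity follows at once. This argument handles all admissible $k$ uniformly, and it dovetails with \thmref{thm symmetric poly}, which is the same coefficient extraction for $f(x)=x^{n+m-1}$ of degree $\ge n-1$ (where the top coefficient picks out $h_m$ instead of vanishing).

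I expect the main obstacle here to be bookkeeping rather than mathematics: one must justify the monicity/coefficient claim and invoke uniqueness of the degree-$\le n-1$ interpolant, both routine. If instead one prefers to remain strictly within the paper's elementary, induction-based style, the same theorem follows by induction on $n$ from the recursion (obtained via $\prod_{j\neq i,\,j\le n}(x_i-x_j)=(x_i-x_n)\prod_{j\neq i,\,j\le n-1}(x_i-x_j)$ and reindexing)
\[
\sum_{i=1}^{n}\frac{x_i^{k}}{\prod_{j\neq i}(x_i-x_j)}
=\sum_{i=1}^{n-1}\frac{x_i^{k-1}}{\prod_{j\neq i}(x_i-x_j)}
+x_n\sum_{i=1}^{n}\frac{x_i^{k-1}}{\prod_{j\neq i}(x_i-x_j)},
\]
where the first right-hand sum is the $(n-1)$-variable sum; feeding in both the outer hypothesis (for $n-1$) and an inner induction on $k$ kills every term for $1\le k\le n-2$. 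In that route the genuinely delicate point is the inner base case $k=0$, namely $\sum_{i=1}^{n}1/\prod_{j\neq i}(x_i-x_j)=0$ for $n\ge 2$, which the recursion does not reduce and which must be proved on its own (for instance by clearing denominators over the Vandermonde product, or again by the coefficient argument above).
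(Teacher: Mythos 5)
Your main argument is correct, and it takes a genuinely different route from the paper. Both proofs hinge on Lagrange interpolation, but the paper uses it only once and in evaluation form: for the top case $k=n-2$ it interpolates $-x^{n-2}$ at the $n-1$ nodes $x_1,\dots,x_{n-1}$, evaluates at $x_n$, and rearranges so that the $i=n$ term is absorbed; it then handles smaller $k$ by a downward descent, using exactly the telescoping identity you display (your recursion is the paper's Case~2 identity rearranged), applied ``successively'' for $k=n-3, n-4,\dots$. Your primary proof instead extracts the coefficient of $x^{n-1}$ from the degree-$\le n-1$ interpolant of $x^k$ at all $n$ nodes: since the interpolant of a polynomial of degree $k\le n-2$ is the polynomial itself, that coefficient vanishes, and all cases $0\le k\le n-2$ fall at once, with no induction or case analysis; it also exhibits \thmref{thm symmetric poly} as the same coefficient extraction for $f(x)=x^{n+m-1}$. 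What each buys: your argument is uniform and shorter, while the paper's stays within the elementary, recursion-based style of its other proofs. One correction to your closing remark: the $k=0$ base does not have to be proved ``on its own'' in the inductive route. That need is an artifact of your choice to run the inner induction upward in $k$; the paper runs it downward from $k=n-2$, where Case~1 supplies the top case for every $n$ (with $n=2$, $k=0$ as the bottom instance), and the descent then reaches exponent $0$ at $n$ variables using the $(n-1)$-variable exponent-$0$ case together with the exponent-$1$ case already established, so no separate lemma for $\sum_{i=1}^{n}1/\prod_{j\neq i}(x_i-x_j)=0$ is required.
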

\begin{proof}
Case $1$: $k=n-2$.
For $n=2$, we have $\frac{1}{x_1-x_2}+\frac{1}{x_2-x_1}=0$. Let $n>2$, and
let $k=n-2$. Lagrange interpolation formula applied to the polynomial $q(x_n)=-x_n^k$ gives
$$-x_n^k=\sum_{i=1}^{n-1}\frac{-x_i^k \cdot \prod_{\substack{j=1 \\ j\not=i}}^{n-1}(x_n-x_j)}
{\prod_{\substack{j=1 \\ j\not=i}}^{n-1}(x_i-x_j)}=
\sum_{i=1}^{n-1}\frac{x_i^k \cdot \prod_{\substack{j=1 }}^{n-1}(x_n-x_j)}
{\prod_{\substack{j=1 \\ j\not=i}}^{n}(x_i-x_j)}
=\prod_{\substack{j=1 }}^{n-1}(x_n-x_j) \sum_{i=1}^{n-1}\frac{x_i^k}
{\prod_{\substack{j=1 \\ j\not=i}}^{n}(x_i-x_j)}
.$$ This completes the proof in this case.

Case $2$: $0 \leq k=n-3$. We have

$\displaystyle \sum_{i=1}^{n-1}\frac{x_i^{n-3}}{\prod_{\substack{j=1 \\ j\not=i}}^{n-1}(x_i-x_j)}=
\sum_{i=1}^{n-1}\frac{x_i^{n-3}(x_i-x_n)}{\prod_{\substack{j=1 \\ j\not=i}}^{n}(x_i-x_j)}
=\sum_{i=1}^{n-1}\frac{x_i^{n-2}}{\prod_{\substack{j=1 \\ j\not=i}}^{n}(x_i-x_j)}-x_n
\sum_{i=1}^{n-1}\frac{x_i^{n-3}}{\prod_{\substack{j=1 \\ j\not=i}}^{n}(x_i-x_j)}.
$
Equivalently,
$ \displaystyle \sum_{i=1}^{n-1}\frac{x_i^{n-3}}{\prod_{\substack{j=1 \\ j\not=i}}^{n-1}(x_i-x_j)}=
\sum_{i=1}^{n}\frac{x_i^{n-2}}{\prod_{\substack{j=1 \\ j\not=i}}^{n}(x_i-x_j)}
-x_n
\sum_{i=1}^{n}\frac{x_i^{n-3}}{\prod_{\substack{j=1 \\ j\not=i}}^{n}(x_i-x_j)}.
$
Note that case $1$ with $n-1$ and $n$ implies
$ \displaystyle \sum_{i=1}^{n-1}\frac{x_i^{n-3}}{\prod_{\substack{j=1 \\ j\not=i}}^{n-1}(x_i-x_j)}=0
=\sum_{i=1}^{n}\frac{x_i^{n-2}}{\prod_{\substack{j=1 \\ j\not=i}}^{n}(x_i-x_j)}.
$
This completes the proof in this case.

Following the strategy of case $2$ successively, one can prove the result for each $k$ with $0 \leq k \leq n-4$.
\end{proof}
We now generalize \propref{prop3var} and \thmref{Main Theorem} to an arbitrary number of variables.
We first note that discriminants of the variables appear in both \eqnref{eqn2} and \propref{prop3var}.

A difference product of the variables $x_1, x_2, \dots, x_n$
is the following Vandermonde determinant:
$\displaystyle d(x_1, x_2, \dots, x_n)=\det{(\mathbf{D})}, \text{ where } \mathbf{D}=\begin{pmatrix} 1&x_1&x_1^2&\dots&x_{1}^{n-1}\\
               1&x_2&x_2^2&\dots&x_{2}^{n-1}\\
               \hdotsfor[2]{5}\\
               1&x_n&x_n^2&\dots&x_{n}^{n-1}
\end{pmatrix}_{n \times n}.$
%
It was shown by Cauchy in 1812 that
\begin{equation}\label{eqn disc1}
d(x_1, x_2, \dots, x_n)=\prod_{1\leq i<j \leq n}(x_j-x_i) .
\end{equation}
The following theorem is the answer for question \ref{ques factor} for arbitrary
number of variables:
\begin{theorem}\label{genprop3var}
For $0 \leq m \in \NN$, $2 \leq n \in \NN$, and distinct variables $x_1, x_2, \dots ,x_n$, we have
\begin{equation*}
\sum_{k=1}^{n}(-1)^{n-k}x_{k}^{n+m-1}d(x_1,x_2,\dots,x_{k-1},x_{k+1},\dots, x_n)=d(x_1, x_2, \dots, x_n)h_{m}(x_1, x_2, \dots, x_n).
\end{equation*}
\end{theorem}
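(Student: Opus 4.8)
The plan is to recognize the left-hand side as $d(x_1,\dots,x_n)$ multiplied by the sum that appears in \thmref{thm symmetric poly}, so that the identity collapses onto that theorem. The bridge between the two statements is a ratio computation for Vandermonde determinants: for each fixed $k$,
\[
(-1)^{n-k}\,d(x_1,\dots,x_{k-1},x_{k+1},\dots,x_n)=\frac{d(x_1,\dots,x_n)}{\prod_{j\ne k}(x_k-x_j)}.
\]
Granting this, for $m\ge 1$ the left-hand side of the theorem becomes
\[
\sum_{k=1}^{n}x_k^{\,n+m-1}\,\frac{d(x_1,\dots,x_n)}{\prod_{j\ne k}(x_k-x_j)}
=d(x_1,\dots,x_n)\sum_{k=1}^{n}\frac{x_k^{\,n+m-1}}{\prod_{j\ne k}(x_k-x_j)},
\]
and \thmref{thm symmetric poly} identifies the remaining sum as $h_m(x_1,\dots,x_n)$, which finishes the argument.

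To prove the ratio identity I would start from Cauchy's product formula \eqnref{eqn disc1} and split the product $d(x_1,\dots,x_n)=\prod_{i<j}(x_j-x_i)$ into the factors that involve $x_k$ and the factors that do not. The factors that do not involve $x_k$ reassemble precisely into $d(x_1,\dots,x_{k-1},x_{k+1},\dots,x_n)$, while the factors that do involve $x_k$ are $\prod_{i<k}(x_k-x_i)\,\prod_{j>k}(x_j-x_k)$. Rewriting each of the $n-k$ factors $(x_j-x_k)$ with $j>k$ as $-(x_k-x_j)$ contributes a sign $(-1)^{n-k}$ and converts the whole $x_k$-block into $(-1)^{n-k}\prod_{j\ne k}(x_k-x_j)$; solving the resulting relation for the deleted Vandermonde and using $1/(-1)^{n-k}=(-1)^{n-k}$ yields the claim. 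This sign bookkeeping is the only delicate point, and I expect it to be the main obstacle, since everything else is a direct substitution.

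It remains to treat $m=0$, which is excluded by the hypothesis $m\ge 1$ of \thmref{thm symmetric poly}. In that case the right-hand side is $d(x_1,\dots,x_n)\,h_0=d(x_1,\dots,x_n)$, and the left-hand side is exactly the Laplace expansion of the Vandermonde determinant $\det(\mathbf{D})$ along its last column: the minor obtained by deleting row $k$ and the last column is $d(x_1,\dots,x_{k-1},x_{k+1},\dots,x_n)$, and its cofactor sign is $(-1)^{k+n}=(-1)^{n-k}$. Thus the $m=0$ case holds by the very definition of $d(x_1,\dots,x_n)$; in fact, dividing through by $d(x_1,\dots,x_n)$ recovers the $m=0$ analogue of \thmref{thm symmetric poly} as a bonus.
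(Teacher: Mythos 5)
Your proposal is correct and takes essentially the same route as the paper: the paper's (very terse) proof likewise rewrites each deleted Vandermonde determinant via Cauchy's product formula \eqnref{eqn disc1} and reduces the identity to \thmref{thm symmetric poly}, with your ratio identity supplying exactly the sign bookkeeping the paper leaves implicit. Your separate treatment of $m=0$ by Laplace expansion along the last column is a genuine (if small) improvement, since \thmref{thm symmetric poly} is stated only for $m\ge 1$ and the paper's one-line appeal to it silently skips that case.
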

\begin{proof}
Using \eqnref{eqn disc1},
$
\displaystyle d(x_1,x_2,\dots,x_{k-1},x_{k+1},\dots, x_n)=\prod_{\substack{1 \leq i<j \leq n \\ i\not=k \not=j}}^{n}(x_j-x_i).
$
Then the result follows from \thmref{thm symmetric poly}.
\end{proof}
For $0 \leq m \in \NN$, $2 \leq n \in \NN$, let
$ \displaystyle \mathbf{A}=\begin{pmatrix} 1&x_1&x_1^2&\dots&x_{1}^{n-2} \, \, x_1^{n+m-1}\\
               1&x_2&x_2^2&\dots&x_{2}^{n-2} \, \, x_2^{n+m-1}\\
               \hdotsfor[2]{5}\\
               1&x_n&x_n^2&\dots&x_{n}^{n-2} \, \, x_n^{n+m-1}
\end{pmatrix}_{n \times n} .$
Then \thmref{genprop3var} can be expressed equivalently as follows:
\begin{equation}\label{eqn determinantal}
\det(\mathbf{A})= d(x_1, x_2, \dots, x_n)h_{m}(x_1,x_2, \dots, x_n).
\end{equation}
We now generalize \thmref{Main Theorem} to an arbitrary number of variables.
Let the polynomial $p(t)$ of degree $N$ be defined as earlier. We set
$ \displaystyle \mathbf{M_{n}}=\begin{pmatrix} 1&x_1&x_1^2&\dots&x_{1}^{n-2} \, \, p(x_1)\\
               1&x_2&x_2^2&\dots&x_{2}^{n-2} \, \, p(x_2)\\
               \hdotsfor[2]{5}\\
               1&x_n&x_n^2&\dots&x_{n}^{n-2} \, \, p(x_n)
\end{pmatrix}_{n \times n}.$
\begin{remark}\label{rem volume}
The volume of  $n$-parallelpipeds generated by the column vectors of $\mathbf{M_n}$ is
$\left| \det(\mathbf{M_{n}}) \right|$, and the volumes of the $n$-simplices generated by the column vectors of $\mathbf{M_n}$ is
$\frac{1}{n!} \left| \det(\mathbf{M_{n}}) \right|$.
\end{remark}
\begin{theorem}\label{genmainthhm}
Let $2\leq n\in \NN$. If $N \leq n-2$, then $\det(\mathbf{M_{n}})=0$. If $n-1\leq N \in \NN$,
\begin{equation*}
\det(\mathbf{M_{n}})=d(x_1, x_2, \dots, x_n)\sum_{k=n-1}^{N} a_{k}h_{k-n+1}(x_1,x_2, \dots, x_n) .
\end{equation*}
\end{theorem}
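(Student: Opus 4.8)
The plan is to exploit the multilinearity of $\det$ in its last column, together with the determinantal identity \eqnref{eqn determinantal} already established. Writing $p(x_i)=\sum_{k=0}^{N}a_k x_i^k$, the last column of $\mathbf{M_{n}}$ is a linear combination of the vectors $(x_1^k,\dots,x_n^k)^{T}$ for $k=0,1,\dots,N$. First I would use linearity of the determinant in this column to expand
\[
\det(\mathbf{M_{n}})=\sum_{k=0}^{N} a_k \det(\mathbf{M_{n}}^{(k)}),
\]
where $\mathbf{M_{n}}^{(k)}$ is obtained from $\mathbf{M_{n}}$ by replacing its last column with $(x_1^k,\dots,x_n^k)^{T}$; equivalently, $\mathbf{M_{n}}^{(k)}$ is the matrix whose columns are the powers $1,x,x^2,\dots,x^{n-2},x^{k}$.

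The second step is to discard the vanishing terms. For $0\leq k\leq n-2$ the power $x^{k}$ already occurs among the first $n-1$ columns of $\mathbf{M_{n}}^{(k)}$, so that matrix has two equal columns and $\det(\mathbf{M_{n}}^{(k)})=0$. Thus only the indices $k\geq n-1$ survive in the sum. If $N\leq n-2$ there are no such indices, and we immediately obtain $\det(\mathbf{M_{n}})=0$, settling the first assertion.

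The key step is to identify the surviving determinants with the ones computed earlier. For $k\geq n-1$, put $m=k-n+1\geq 0$, so that $k=n+m-1$; then the columns of $\mathbf{M_{n}}^{(k)}$ are exactly $1,x,\dots,x^{n-2},x^{n+m-1}$, which is precisely the matrix $\mathbf{A}$ appearing in \eqnref{eqn determinantal} for this value of $m$. Hence \eqnref{eqn determinantal} (equivalently \thmref{genprop3var}) yields $\det(\mathbf{M_{n}}^{(k)})=d(x_1,\dots,x_n)\,h_{k-n+1}(x_1,\dots,x_n)$. Substituting this into the expansion and factoring out $d(x_1,\dots,x_n)$ gives the stated formula for $n-1\leq N$.

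The whole argument is essentially bookkeeping, and the only point that requires care is matching the exponent $k$ in the last column of $\mathbf{M_{n}}^{(k)}$ with the shifted index $m=k-n+1$ in $\mathbf{A}$, so that the degree-$m$ complete symmetric polynomial $h_{m}$ correctly becomes $h_{k-n+1}$ in the final sum. I do not expect a genuine obstacle here, since all of the substantive content has already been packaged into \eqnref{eqn determinantal}.
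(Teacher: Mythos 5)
Your proof is correct, but it is organized differently from the paper's. The paper's proof (given only in outline) proceeds by induction on the degree $N$, following the template of \thmref{Main Theorem}: one peels off the leading term $a_{N+1}t^{N+1}$, applies the inductive hypothesis to the lower-degree part, and handles the top term with \eqnref{eqn determinantal}, so linearity of the determinant in the last column is invoked once per inductive step. You instead apply multilinearity all at once, writing $\det(\mathbf{M_{n}})=\sum_{k=0}^{N}a_k\det(\mathbf{M_{n}}^{(k)})$, killing the terms with $0\leq k\leq n-2$ by the repeated-column argument, and identifying each surviving $\mathbf{M_{n}}^{(k)}$ with the matrix $\mathbf{A}$ of \eqnref{eqn determinantal} under the substitution $m=k-n+1$. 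The two arguments have the same mathematical content --- everything reduces to \eqnref{eqn determinantal}, i.e.\ to \thmref{genprop3var} --- but your one-shot expansion is arguably cleaner: it eliminates the inductive bookkeeping (in effect, the paper's induction is your expansion unrolled one term at a time), and it disposes of the degenerate case $N\leq n-2$ for free, since then \emph{every} term in the expansion has a repeated column, a point the paper's sketch does not address explicitly. What the paper's route buys is structural parallelism with the proof of \thmref{Main Theorem}, which is why the author could leave the details as an exercise; your version would serve equally well as the worked-out solution to that exercise.
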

\begin{proof}
The proof is given by induction on $N$, the degree of the polynomial $p(t)$ (as in the proof of \thmref{Main Theorem}),
and by using \eqnref{eqn determinantal}.
The details are left as an exercise to the reader.
\end{proof}
\begin{acknowledgement}
I would like to thank Dr. Robert Rumely for useful discussions concerning this paper.
\end{acknowledgement}

\end{document}